\let\Algorithm\algorithm
\renewcommand\algorithm[1][]{\Algorithm[#1]\setstretch{1.3}}
\algnewcommand{\inputs}[1]{%
	\State \textbf{inputs:} \Statex \parbox[t]{.8\linewidth}{\raggedright #1}	
}
\algnewcommand{\initialize}[1]{%
	\State \textbf{initialize:}
	\State \hspace*{\algorithmicindent}\parbox[t]{.8\linewidth}{\raggedright #1}
}
\tikzstyle{block} = [rectangle, draw, fill=lightgray!20, 
\tikzstyle{blockfix} = [rectangle, draw, fill=lightgray!20, 
\tikzstyle{line} = [draw, -latex']
\tikzstyle{cloud} = [draw, ellipse,fill=black!20, node distance=2.5cm,minimum height=2em]
\renewcommand{\vec}[1]{\mathbf{#1}}
\providecommand{\doi}[1]{%
	\begingroup
	\let\bibinfo\@secondoftwo
	\urlstyle{rm}%
	\href{http://dx.doi.org/#1}{%
		doi:\discretionary{}{}{}%
		\nolinkurl{#1}
	}%
	\endgroup
}
\begin{document}
	
	\begin{frontmatter}
		
		\title{Improved Sparse Low-Rank Matrix Estimation}
		\author[poly]{Ankit Parekh\corref{cor1}}
		\author[polyEE]{Ivan W. Selesnick}
		\cortext[cor1]{Corresponding author. Email address: ankit.parekh@nyu.edu \\ Source Code available at \url{https://github.com/aparek/ISLRMatrix} \\
		}
		\address[poly]{Dept.\ of Mathematics, Tandon School of Engineering, New York University}
		\address[polyEE]{Dept.\ of Electrical and Computer Engineering, Tandon School of Engineering, New York University}

		\begin{abstract}
			We address the problem of estimating a sparse low-rank matrix from its noisy observation. We propose an objective function consisting of a data-fidelity term and two parameterized non-convex penalty functions. Further, we show how to set the parameters of the non-convex penalty functions, in order to ensure that the objective function is strictly convex. The proposed objective function better estimates sparse low-rank matrices than a convex method which utilizes the sum of the nuclear norm and the $\ell_1$ norm. We derive an algorithm (as an instance of ADMM) to solve the proposed problem, and guarantee its convergence provided the scalar augmented Lagrangian parameter is set appropriately. We demonstrate the proposed method for denoising an audio signal and an adjacency matrix representing protein interactions in the `Escherichia coli' bacteria. 
		\end{abstract}
		\begin{keyword}
			Low-rank matrix, sparse matrix, speech denoising, nonconvex regularization, convex optimization
		\end{keyword}
		
	\end{frontmatter}

\section{Introduction}
\label{introduction}
We aim to estimate a sparse low-rank matrix $\vec{X} \in \mathbb{R}^{m \times n}$ from its noisy observation $\vec{Y} \in \mathbb{R}^{m \times n}$, i.e.,	
\begin{align}
	\label{eq::signal model}
	\vec{Y} = \vec{X}+\vec{W}, \qquad \vec{W} \in \mathbb{R}^{m \times n},
\end{align} where $\vec{W}$ represents additive white Gaussian noise (AWGN) matrix. The estimation of sparse low-rank matrices has been studied \cite{Buja2013} and used for various applications such as covariance matrix estimation \cite{Bien2011, ElKaroui2008,Zhou2014a}, subspace clustering \cite{Han2015}, biclustering \cite{Lee2010}, sparse reduced rank regression \cite{Bunea2012,Chen2012}, graph denoising and link prediction \cite{Richard2012,Richard2014}, image classification \cite{Zhang2013} and hyperspectral unmixing \cite{Giampouras2016}. 

In order to estimate the sparse low-rank matrix $\vec{X}$, it has been proposed \cite{Richard2012} to solve the following optimization problem 
\begin{align}
	\label{eq::SLR}
	\arg\min_{\vec{X} \in \mathbb{R}^{m \times n}} \Biggl\lbrace \dfrac{1}{2}\|\vec{Y}-\vec{X}\|_F^2 + \lambda_0\|\vec{X}\|_* + \lambda_1 \|\vec{X}\|_1 \Biggr\rbrace , 
\end{align} where $\|\cdot\|_*$ is the nuclear norm, $\|\cdot \|_1$ is the entry-wise $\ell_1$ norm and $\lambda_i \geqslant 0$ are the regularization parameters. The nuclear norm induces sparsity of the singular values of the matrix $\vec{X}$, while the entry-wise $\ell_1$ norm induces sparsity of the elements of $\vec{X}$. 

The nuclear norm and the $\ell_1$ norm are convex relaxations of the non-convex rank and sparsity constraints, respectively. The nuclear norm can be considered as the $\ell_1$ norm applied to the singular values of the matrix. It is known that the $\ell_1$ norm underestimates non-zero signal values, when used as a sparsity-inducing regularizer. As a result, the sparse low-rank (SLR) problem in \eqref{eq::SLR} can be considered, in general, to be over-relaxed \cite{Jojic2011}. Further, it is known that the performance of nuclear norm for sparse regularization of the singular values is sub-optimal \cite{Nadakuditi2014}. 

In order to estimate the non-zero signal values more accurately, non-convex regularization has been favored over convex regularization \cite{Chartrand_2009_ISBI,Trzasko2009,Repetti2014,Portilla2009,Xin2015}. Furthermore, it has been shown that non-convex penalty functions can induce sparsity of the singular values more effectively than the nuclear norm \cite{Lu2014,Gu2014,Hansson2012,Chartrand2012,Parekh2015sub}. Indeed, it was shown that nonconvex regularizers are better able to estimate simultaneously sparse and low-rank matrices in the context of spectral unmixing for hyperspectral images \cite{Giampouras2016}. The use of non-convex regularizers (penalty functions), however, generally leads to non-convex optimization problems. The non-convex optimization problems suffer from numerous issues (sub-optimal local minima, sensitivity to changes in the input data and the regularization parameters, non-convergence, etc.).

In this paper, we avoid the issues of non-convexity by using parameterized penalty functions, which aid in ensuring the strict convexity of the proposed objective function. We propose to solve the following improved sparse low-rank (ISLR) formulation
\begin{align}
	\label{eq::cost function}
	\arg\min_{\vec{X} \in \mathbb{R}^{m \times n}} \Biggl\lbrace F(\vec{X}):= &\dfrac{1}{2}\|\vec{Y - X}\|_F^2 + \lambda_0\sum_{i=1}^{k}\phi(\sigma_i(\vec{X});a_0) \nonumber \\
	&+ \lambda_1\sum_{i=1}^{m}\sum_{j=1}^{n} \phi(\vec{X}_{i,j};a_1) \Biggr\rbrace,
\end{align} where $k = \min(m,n)$ and 	 $\phi\colon \mathbb{R} \to \mathbb{R}$ is a parameterized non-convex penalty function (see Sec.~\ref{subsection::Penalty Functions}). Note that, if $\lambda_1 = 0$, then the ISLR formulation reduces to the generalized nuclear norm minimization problem \cite{Lu2014b,Parekh2015sub}. Further, if $\lambda_1=0$ and $\phi(x;a) = |x|$, then the ISLR problem \eqref{eq::cost function} reduces to the singular value thresholding (SVT) problem. 

The contributions of this paper are two-fold. First, we show how to set the parameters $a_0$ and $a_1$ to ensure that the function $F$ in \eqref{eq::cost function} is strictly convex. Second, we provide an ADMM based algorithm to solve \eqref{eq::cost function}, which utilizes single variable-splitting compared to two variable-splitting as in \cite{Zhang2013}. We guarantee the convergence of ADMM, provided the scalar augmented Lagrangian parameter $\mu$, satisfies $\mu > 1$. 

\subsection{Related work}

The parameterized non-convex penalty functions used in this paper have designated non-convexity, which enables the overall objective function $F$ in \eqref{eq::cost function} to be strictly convex. In particular, if the parameters $a_0$ and $a_1$ exceed their critical value, then the function $F$ in \eqref{eq::cost function} is non-convex. A similar framework of convex objective functions with non-convex regularization was studied for several signal processing applications (see for eg., \cite{Selesnick2015}, \cite{Ding2015}, \cite{Lanza2015}, \cite{Parekh2015sub2} and the references therein). It was reported that non-convex regularization outperformed convex regularization methods for these applications. 

The sparse low-rank (SLR) formulation in \eqref{eq::SLR} is different from the low-rank + sparse decomposition \cite{Candes2011}, also known as the robust principal component analysis (RPCA). Both the SLR and the RPCA formulations utilize the nuclear norm and the $\ell_1$ norm as sparsity-inducing regularizers \cite{Zhou2014,Zhou2011}. The RPCA formulation aims to estimate the matrix, which is the sum of a low-rank and a sparse matrix. Note that, in the case of RPCA, the matrix to be estimated is itself neither sparse or low-rank \cite{Chandrasekaran2009,Chandrasekaran2009J}. In contrast, the SLR problem \eqref{eq::SLR}, and the one proposed in this paper, considers the case wherein the matrix to be estimated is simultaneously sparse and low-rank (similar to \cite{Giampouras2016}). 

Several well-studied convex optimization algorithms, such as ADMM \cite{Goldstein2009,Afonso2010}, ISTA/FISTA \cite{Beck2009,Figueiredo2007}, and proximal gradient methods \cite{Combettes2011} can be applied to solve convex objective functions of the type \eqref{eq::SLR}. The SLR objective function \eqref{eq::SLR}, has been solved using Generalized Forward-Backward \cite{Raguet2013}, Incremental Proximal Descent \cite{Richard2012} (introduced in \cite{Bertsekas2010}), Majorization-Minimization \cite{Hu2012}, and the Inexact Augmented Lagrangian Multiplier (IALM) method \cite{Lin2010}. The IALM method can also be used to solve the SLR problem, although with a different data-fidelity term \cite{Zhang2013}.

\section{Preliminaries}
\label{section::Preliminaries}
We denote vectors and matrices by lower and upper case letters respectively. For a matrix $\vec{Y}$, we use the following entry-wise norms, 
\begin{align}
\label{eq::Entry-wise Norms}
\|\vec{Y}\|_F^2 := \sum_{i,j} |\vec{Y}_{i,j}|^2, \quad 	\|\vec{Y}\|_1 := \sum_{i,j}|\vec{Y}_{i,j}|.
\end{align} Further, we use the nuclear norm (also called the `Schatten-1' norm) defined as
\begin{align}
\label{eq::Nuclear Norm}
\|\vec{Y}\|_* := \sum_{i=1}^{k} \sigma_i(\vec{Y}), 
\end{align} where $\sigma_i(\vec{Y})$ represent the singular values of the matrix $\vec{Y} \in \mathbb{R}^{m \times n}$ and $k = \min (m,n)$.

\subsection{Parameterized non-convex penalty functions}
\label{subsection::Penalty Functions}
We propose to use non-convex penalty functions $\phi(x;a)$ parameterized by the parameter $a \geqslant 0$. The value of $a$ provides the degree of non-convexity of the penalty functions. Below we define such non-convex penalty functions and list their properties. 
\newtheorem{assumption}{\bf Assumption}
\begin{assumption}
	\label{theorem::assumption 1}
	The non-convex penalty function $\phi\colon\mathbb{R} \to \mathbb{R}$ satisfies the following
	\begin{enumerate}
		\item $\phi$ is continuous on $\mathbb{R}$, twice differentiable on $\mathbb{R}\!\setminus\! \lbrace 0\rbrace$ and symmetric, i.e., $\phi(-x; a) = \phi(x; a)$
		\item $\phi'(x) > 0, x > 0$	
		\item $\phi''(x) \leq 0,  x > 0$
		\item $\phi'(0^{+}) = 1$
		\item $\inf\limits_{x\neq0}\phi''(x;a) = \phi''(0^+;a) = -a$
	\end{enumerate}
\end{assumption} 

\begin{figure}
	\centering
	\includegraphics[]{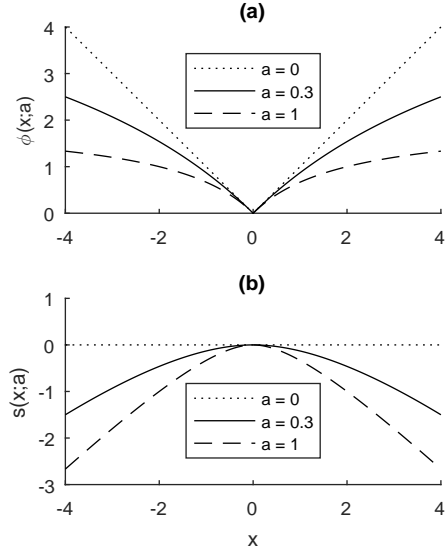}
	\caption{(a) Non-convex penalty function $\phi$ in \eqref{eq::rat penalty} for three values of $a$. (b) The twice continuously differentiable concave function $s(x;a) = \phi(x;a) - |x|$ in \eqref{eq::s function} for the corresponding values of $a$.}
	\label{fig::penalty functions}
\end{figure}

An example of a non-convex penalty function satisfying Assumption 1 is the rational penalty function \cite{Geman1992} defined as
\begin{align}
\label{eq::rat penalty}
\phi(x;a) := \dfrac{|x|}{1 + a|x|/2}, \qquad a \geqslant 0.
\end{align} 
The $\ell_1$ norm is recovered as a special case of the non-convex rational penalty function (i.e., if $a = 0$, then $\phi(x;0) = |x|$). Figure~\ref{fig::penalty functions}(a) shows the rational penalty function \eqref{eq::rat penalty} for different values of $a$. Other examples of penalty functions satisfying Assumption \ref{theorem::assumption 1} are the logarithmic penalty \cite{Candex_2008_JFAP,Mohan2012}, arctangent penalty \cite{Selesnick_2014_MSC} and the Laplace penalty \cite{Trzasko2009}.

The proximity operator of $\phi$ \cite{Combettes2007}, $\mbox{prox}_{\phi}:\mathbb{R}\to\mathbb{R}$, is defined as
\begin{align}
\mbox{prox}_{\phi}(y; \lambda,a) := \arg\min_{x \in \mathbb{R}} \left\lbrace \frac{1}{2}(y-x)^2 + \lambda\phi(x;a) \right\rbrace. \nonumber
\end{align} The proximity operator associated with the function $\phi(x;a)$, satisfying Assumption 1, is continuous with 
\begin{align}
\label{theta}
\mbox{prox}_{\phi}(y; \lambda,a) = 0, \forall |y| < \lambda,
\end{align} if $0 \leqslant a < 1/\lambda$. The proximity operators associated with the arctangent and the logarithmic penalty are provided in \cite{Selesnick_2014_MSC}. Note that for $a=0$, the proximity operator is the soft-threshold function \cite{Donoho1995}.

The proximity operator associated with the $\ell_1$ norm is the well-known soft-threshold function \cite{Donoho1995}. Note that the soft-threshold function underestimates non-zero values. In contrast, the proximity operators, associated with the non-convex penalty functions satisfying Assumption 1, approach the identity function asymptotically \cite{Selesnick_2014_MSC}. Thus, the proximity operators used in this paper estimate the non-zero values more accurately than the $\ell_1$ norm. 

\section{Convexity Condition}
\label{section::Convexity condition}
In this section we derive a condition to ensure that the function $F$ in \eqref{eq::cost function} is strictly convex. In particular, we show that the objective function $F$ is strictly convex if $a_0$ and $a_1$ lie inside a designated region. To this end, we note the following lemmas. 
\newtheorem{lemma}{\bf Lemma}
\begin{lemma}{\cite{Parekh2015}}
	\label{lemma 1}
	Let $\phi\colon\mathbb{R}\to\mathbb{R}$ be a non-convex penalty function satisfying Assumption \ref{theorem::assumption 1}. The function $s\colon\mathbb{R} \to \mathbb{R}$ defined as
	\begin{align}
	\label{eq::s function}
	s(x;a) := \phi(x;a) - |x|,
	\end{align} is twice continuously differentiable, concave and
	\begin{align}
	-a \leqslant s''(x;a) \leqslant 0. 
	\end{align}
\end{lemma}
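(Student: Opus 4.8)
The plan is to analyze $s$ piecewise on the half-lines $x>0$ and $x<0$, where $\phi$ is smooth and $|x|$ is linear, and then to verify the required regularity at the origin, where $|x|$ has its kink. First I would record two consequences of the symmetry in Assumption~\ref{theorem::assumption 1}: since $\phi(-x;a)=\phi(x;a)$, differentiating shows that $\phi'$ is odd and $\phi''$ is even. It then suffices to understand $s$ on $x\geqslant 0$ and extend by the evenness of $s$, which follows directly from $s(-x;a)=\phi(-x;a)-|{-x}|=\phi(x;a)-|x|=s(x;a)$.

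On the open half-line $x>0$ we have $s(x;a)=\phi(x;a)-x$, so $s'(x;a)=\phi'(x;a)-1$ and $s''(x;a)=\phi''(x;a)$; by the evenness of $\phi''$ the same second derivative holds for $x<0$. Away from the origin, continuity of $s''$ is inherited from the twice-differentiability of $\phi$ on $\mathbb{R}\setminus\{0\}$, so the only issue is the point $x=0$.

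The delicate step, and the one I expect to be the main obstacle, is establishing $C^2$ regularity across $x=0$, where the corner of $|x|$ must be cancelled exactly by the non-smoothness of $\phi$. For the first derivative I would compute the one-sided limits: property~4 gives $s'(0^+;a)=\phi'(0^+;a)-1=0$, while oddness of $\phi'$ yields $\phi'(0^-;a)=-1$ and hence $s'(0^-;a)=\phi'(0^-;a)+1=0$; the two limits agree, so $s$ is differentiable at $0$ with $s'(0;a)=0$. For the second derivative, property~5 identifies $\lim_{x\to 0^+}\phi''(x;a)=-a$, and evenness of $\phi''$ gives the same left-hand limit, so $s''$ extends continuously to the origin with $s''(0;a)=-a$. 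This is precisely what delivers the claimed twice-continuous differentiability.

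Finally, the sign and the bound on $s''$ follow immediately. Concavity, i.e. $s''\leqslant 0$, holds because $s''(x;a)=\phi''(x;a)\leqslant 0$ for $x\neq 0$ by property~3 together with evenness, while $s''(0;a)=-a\leqslant 0$. The lower bound is exactly the infimum condition in property~5: $\inf_{x\neq 0}\phi''(x;a)=-a$ gives $\phi''(x;a)\geqslant -a$ for all $x\neq 0$, and the origin attains this value, so $-a\leqslant s''(x;a)\leqslant 0$ for every $x$, completing the argument.
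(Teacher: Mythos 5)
The paper does not actually prove this lemma: it is imported by citation from the authors' earlier work \cite{Parekh2015}, so there is no in-paper argument to compare yours against. On its own merits, your proof is correct and is the natural one: reduce to $x>0$ by the evenness of $s$ (inherited from the symmetry of $\phi$), observe $s''=\phi''$ off the origin, and handle the kink of $|x|$ at $x=0$ using properties 4 and 5 together with the oddness of $\phi'$ and evenness of $\phi''$. The only place worth a touch more care is the step from ``the one-sided limits $s'(0^{\pm};a)=0$ agree'' to ``$s$ is differentiable at $0$'' (and likewise from $\lim_{x\to 0}s''(x;a)=-a$ to $s''(0;a)=-a$): strictly speaking this needs the continuity of $s$ (resp.\ $s'$) at $0$ plus the standard mean-value-theorem fact that a continuous function whose derivative has a limit at a point is differentiable there with that limiting value. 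Both hypotheses are available (continuity of $\phi$ is in Assumption~\ref{theorem::assumption 1}, and you establish $s'(0;a)=0$ first), so this is a routine gap to close rather than a flaw. The sign bounds $-a\leqslant s''\leqslant 0$ then follow exactly as you say from properties 3 and 5.
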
 The twice continuously differentiable function $s(x;a) = \phi(x;a) - |x|$ is shown in Fig.~\ref{fig::penalty functions}(b), for three values of $a$.

\begin{lemma}{\cite{Parekh2015sub}}
	\label{Lemma 2}
	Let $\phi\colon\mathbb{R} \to \mathbb{R}$ be a non-convex penalty function satisfying Assumption 1 and $s\colon\mathbb{R}\to\mathbb{R}$ be the function as defined in Lemma 1. The function $G_1\colon\mathbb{R}^{m \times n}\to\mathbb{R}$ defined as
	\begin{align}
	\label{eq::ELMA cost convex}
	G_1(\vec{X}) := \dfrac{\alpha_1}{2} \|\vec{Y}- \vec{X}\|_F^2 + \lambda_0 \sum_{i=1}^{k}s\bigl(\sigma_i(\vec{X}); a_0\bigr),
	\end{align} where $k = \min(m,n)$ and $\alpha_1 > 0$, is strictly convex if
	\begin{align}
	\label{eq::first convexity condition}
	0 \leqslant a_0 < \dfrac{\alpha_1}{\lambda_0}.
	\end{align}
\end{lemma}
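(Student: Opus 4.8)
The plan is to isolate the non-convexity of the singular-value term by absorbing it into the Frobenius quadratic, thereby reducing the claim to the convexity of an ordinary spectral matrix function. By Lemma~\ref{lemma 1} we have $-a_0 \leqslant s''(x;a_0)$, so the scalar function $\tilde s(x) := s(x;a_0) + \tfrac{a_0}{2}x^2$ satisfies $\tilde s''(x) = s''(x;a_0) + a_0 \geqslant 0$; hence $\tilde s$ is convex, and it is even because $\phi$ (and therefore $s$) is symmetric. Writing $s(x;a_0) = \tilde s(x) - \tfrac{a_0}{2}x^2$ and summing over the singular values, the identity $\|\vec X\|_F^2 = \sum_{i=1}^k \sigma_i(\vec X)^2$ gives
\begin{align}
\lambda_0 \sum_{i=1}^{k} s\bigl(\sigma_i(\vec X); a_0\bigr) = \lambda_0 \sum_{i=1}^{k} \tilde s\bigl(\sigma_i(\vec X)\bigr) - \frac{\lambda_0 a_0}{2}\|\vec X\|_F^2. \nonumber
\end{align}

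Next I would establish that $H(\vec X) := \sum_{i=1}^k \tilde s(\sigma_i(\vec X))$ is convex on $\mathbb{R}^{m\times n}$. This is the step where scalar convexity of $\tilde s$ must be lifted to the matrix level: the map $f(\sigma_1,\dots,\sigma_k) := \sum_i \tilde s(\sigma_i)$ is absolutely symmetric (invariant under permutations and sign changes, the latter because $\tilde s$ is even) and convex as a finite sum of convex functions of the individual coordinates. By the theory of unitarily invariant matrix functions, a spectral function $f \circ \sigma$ is convex precisely when the absolutely symmetric function $f$ is convex; applying this to $f$ yields the convexity of $H$. I expect this to be the main obstacle, since convexity of $\tilde s$ is not obviously inherited by the singular-value composition, and the argument relies crucially on $\tilde s$ being even so that $f$ is sign-invariant (so that $f\circ\sigma$ is even well defined from the nonnegative singular values).

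Finally I would expand the data-fidelity term and regroup. Using $\tfrac{\alpha_1}{2}\|\vec Y - \vec X\|_F^2 = \tfrac{\alpha_1}{2}\|\vec X\|_F^2 - \alpha_1\langle \vec Y, \vec X\rangle + \tfrac{\alpha_1}{2}\|\vec Y\|_F^2$ together with the identity above,
\begin{align}
G_1(\vec X) = \frac{\alpha_1 - \lambda_0 a_0}{2}\,\|\vec X\|_F^2 - \alpha_1\langle \vec Y,\vec X\rangle + \frac{\alpha_1}{2}\|\vec Y\|_F^2 + \lambda_0 H(\vec X). \nonumber
\end{align}
The term $\lambda_0 H(\vec X)$ is convex, the inner-product and constant terms are affine, and the quadratic $\tfrac{1}{2}(\alpha_1 - \lambda_0 a_0)\|\vec X\|_F^2$ is strictly (indeed strongly) convex exactly when $\alpha_1 - \lambda_0 a_0 > 0$, i.e.\ when $0 \leqslant a_0 < \alpha_1/\lambda_0$. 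Since a strictly convex function plus convex functions is strictly convex, $G_1$ is strictly convex under the stated condition, which completes the argument.
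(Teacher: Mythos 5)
Your proof is correct and follows essentially the same route as the proof this paper defers to in \cite{Parekh2015sub}: absorb the concavity bound $s''(x;a_0)\geqslant -a_0$ into the data-fidelity quadratic via $\|\vec{X}\|_F^2=\sum_{i}\sigma_i(\vec{X})^2$, and lift the scalar convexity of $\tilde s$ to the matrix level through the standard result on convex absolutely symmetric functions of the singular values. The leftover quadratic $\tfrac{1}{2}(\alpha_1-\lambda_0 a_0)\|\vec{X}\|_F^2$ is strictly convex exactly when $\alpha_1-\lambda_0 a_0>0$, which is the stated condition, so no gap remains.
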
 Note that the proof of Lemma \ref{Lemma 2} in \cite{Parekh2015sub} considers the case if $\alpha_1 = 1$, however the generalization for $\alpha_1 > 0$ follows directly. 

\begin{lemma}
	\label{Lemma 3}
	Let $\phi\colon\mathbb{R} \to \mathbb{R}$ be a non-convex penalty function satisfying Assumption 1 and $s\colon\mathbb{R}\to\mathbb{R}$ be the function as defined in Lemma 1. The function $G_2\colon\mathbb{R}^{m \times n}\to\mathbb{R}$ defined as
	\begin{align}
	\label{eq::second cost function with l1 norm}
	G_2(\vec{X}) := \dfrac{\alpha_2}{2}\|\vec{Y}-\vec{X}\|_F^2 + \lambda_1 \sum_{i=1}^{m}\sum_{j=1}^{n}s(\vec{X}_{ij};a_1),
	\end{align}where $\alpha_2 > 0$, is strictly convex if
	\begin{align}
	\label{eq::second convexity condition}
	0 \leqslant a_1 < \dfrac{\alpha_2}{\lambda_1}.
	\end{align}
\end{lemma}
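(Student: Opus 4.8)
The plan is to exploit a structural feature of $G_2$ that distinguishes it from the function $G_1$ of Lemma~\ref{Lemma 2}: whereas the penalty in $G_1$ acts on the singular values and hence couples all entries of $\vec{X}$, the penalty in $G_2$ acts \emph{entry-wise}. Consequently $G_2$ is fully separable across the $mn$ scalar variables $\vec{X}_{ij}$, and the problem collapses to a one-dimensional second-derivative calculation.

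First I would rewrite $G_2$ as a sum over entries. Using $\|\vec{Y}-\vec{X}\|_F^2 = \sum_{i,j}(\vec{Y}_{ij}-\vec{X}_{ij})^2$, one obtains
\begin{align}
G_2(\vec{X}) = \sum_{i=1}^{m}\sum_{j=1}^{n} g_{ij}\!\bigl(\vec{X}_{ij}\bigr), \qquad g_{ij}(x) := \frac{\alpha_2}{2}(\vec{Y}_{ij}-x)^2 + \lambda_1 s(x;a_1), \nonumber
\end{align}
so that each summand depends on exactly one entry of $\vec{X}$.

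Next I would establish strict convexity of each scalar function $g_{ij}$. By Lemma~\ref{lemma 1}, $s(\cdot\,;a_1)$ is twice continuously differentiable with $s''(x;a_1) \geqslant -a_1$; hence $g_{ij}$ is twice continuously differentiable with
\begin{align}
g_{ij}''(x) = \alpha_2 + \lambda_1 s''(x;a_1) \geqslant \alpha_2 - \lambda_1 a_1. \nonumber
\end{align}
Under the hypothesis $a_1 < \alpha_2/\lambda_1$ this lower bound is strictly positive, so $g_{ij}''(x) > 0$ for all $x$, which makes each $g_{ij}$ strictly convex.

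Finally I would lift this to joint strict convexity of $G_2$. Since $G_2$ is a sum of strictly convex functions in distinct coordinates, for any $\vec{X} \neq \vec{X}'$ and $t \in (0,1)$ at least one coordinate differs, and strict convexity in that coordinate yields a strict Jensen inequality that carries through the sum. I expect no substantive obstacle here: the entry-wise structure of the penalty makes this argument markedly simpler than that of Lemma~\ref{Lemma 2}, where the non-separability of the singular values forces a more delicate analysis. The only point worth stating explicitly is the standard fact that coordinate-wise strict convexity of a separable function implies joint strict convexity.
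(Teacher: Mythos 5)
Your proof is correct and follows essentially the same route as the paper: both reduce the claim to the scalar inequality $\alpha_2 + \lambda_1 s''(t;a_1) > 0$, which follows from the bound $s'' \geqslant -a_1$ of Lemma~\ref{lemma 1}. The paper phrases this as positive definiteness of the (diagonal) Hessian of the vectorized function, while you phrase it as coordinate-wise strict convexity plus separability; these are the same argument in different notation.
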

\newtheorem{proof}{Proof}
\begin{proof}
	The Frobenius norm of a matrix $\vec{Y}\in\mathbb{R}^{m\times n}$ can be viewed as the $\ell_2$ norm of a vector $y \in \mathbb{R}^{mn}$, where $y$ contains the entries of the matrix $\vec{Y}$. Similarly, we stack the entries of the matrix $\vec{X} \in \mathbb{R}^{m \times n}$ into a vector $\vec{x} \in \mathbb{R}^{mn}$ and re-write the function $G_2$ as
	\begin{align}
	G_2(\vec{x}) = \dfrac{\alpha_2}{2}\|\vec{y-x}\|_2^2 + \lambda_1 \sum_{i=1}^{mn}s(x_i,a_1). 
	\end{align} In order to ensure the strict convexity of $G_2$, we seek to ensure that the Hessian of $G_2$ be positive definite (i.e., $\nabla^2 G \succ 0$). To this end, the Hessian of $G_2$ is given by
	\begin{align}
	\label{eq:: Hessian of G2}
	\nabla^2 G_2 = \alpha_2 \vec{I} + \lambda_1 \cdot \mathrm{diag}\bigl(s''(x_1;a_1),\hdots , s''(x_{mn};a_1) \bigr),
	\end{align} where $\mathrm{diag}(\cdot)$ represents a diagonal matrix. Note that the identity matrix in \eqref{eq:: Hessian of G2} is of size $mn \times mn$. To ensure that $\nabla^2 G_2$ is positive definite, we seek to ensure 
	\begin{align}
	\alpha_2 + \lambda_2 s''(t; a_1) &> 0, \qquad \forall t \in \mathbb{R}, \\
	\label{eq::lemma 2 inequality}
	s''(t; a_1)  &> -\dfrac{\alpha_2}{\lambda_1}.
	\end{align} Thus, using Lemma 1 and \eqref{eq::lemma 2 inequality}, the Hessian of $G_2$, i.e., $\nabla^2 G_2$, is positive definite if $0 \leqslant a_1 < \alpha_2/\lambda_1$.
\end{proof} 

The following theorem provides the critical values of the parameters $a_0$ and $a_1$ to ensure that the function $F$ in \eqref{eq::cost function} is strictly convex. 
\newtheorem{theorem}{\bf Theorem}
\begin{theorem}
	\label{theorem::Convexity Condition}
	Let $\phi\colon\mathbb{R}\to\mathbb{R}$ be a parameterized non-convex penalty function satisfying Assumption \ref{theorem::assumption 1}. The function $F\colon\mathbb{R}^{m \times n}\to\mathbb{R}$ defined as
	\begin{align}
	F(\vec{X}):= \dfrac{1}{2}\|\vec{Y - X}\|_F^2 &+ \lambda_0\sum_{i=1}^{m}\phi(\sigma_i(\vec{X});a_0) \nonumber \\
	&+ \lambda_1\sum_{i=1}^{m}\sum_{j=1}^{n} \phi(\vec{X}_{i,j};a_1),
	\end{align} is strictly convex if 
	\begin{align}
	\label{eq::convexity condition}
	0 \leqslant a_0\lambda_0 + a_1\lambda_1 < 1.
	\end{align}
\end{theorem}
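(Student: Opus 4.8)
The plan is to decompose $\phi$ into a convex part and a concave part, exactly as isolated in Lemma 1, and then to split the quadratic data-fidelity term into pieces so that each concave contribution can be paired with enough of the quadratic to invoke Lemmas 2 and 3.

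First I would use the defining relation of Lemma 1, $\phi(x;a) = |x| + s(x;a)$, and substitute it into both penalty sums appearing in $F$. Since the singular values $\sigma_i(\vec{X})$ are nonnegative, $\sum_i |\sigma_i(\vec{X})| = \|\vec{X}\|_*$, so the first penalty contributes $\lambda_0\|\vec{X}\|_* + \lambda_0\sum_i s(\sigma_i(\vec{X});a_0)$; likewise the second penalty contributes $\lambda_1\|\vec{X}\|_1 + \lambda_1\sum_{i,j}s(\vec{X}_{i,j};a_1)$. This rewrites $F$ as the sum of the convex norms $\lambda_0\|\vec{X}\|_* + \lambda_1\|\vec{X}\|_1$, the two concave-penalty terms, and the full quadratic $\tfrac12\|\vec{Y}-\vec{X}\|_F^2$.

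Next I would partition the quadratic, writing $\tfrac12 = \tfrac{\alpha_1}{2} + \tfrac{\alpha_2}{2} + \tfrac{\beta}{2}$ with $\alpha_1,\alpha_2 > 0$, $\beta \geqslant 0$, and $\alpha_1 + \alpha_2 + \beta = 1$. Grouping $\tfrac{\alpha_1}{2}\|\vec{Y}-\vec{X}\|_F^2$ with the singular-value $s$-term reproduces exactly the function $G_1$ of Lemma 2, and grouping $\tfrac{\alpha_2}{2}\|\vec{Y}-\vec{X}\|_F^2$ with the entrywise $s$-term reproduces exactly the function $G_2$ of Lemma 3, leaving $F = G_1 + G_2 + \lambda_0\|\vec{X}\|_* + \lambda_1\|\vec{X}\|_1 + \tfrac{\beta}{2}\|\vec{Y}-\vec{X}\|_F^2$.

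The crux is then to check that the hypothesis $a_0\lambda_0 + a_1\lambda_1 < 1$ permits a choice of $\alpha_1,\alpha_2$ making both $G_1$ and $G_2$ strictly convex. Setting $\epsilon := 1 - (a_0\lambda_0 + a_1\lambda_1) > 0$ and taking $\alpha_1 = a_0\lambda_0 + \epsilon/3$, $\alpha_2 = a_1\lambda_1 + \epsilon/3$ yields $\alpha_1 > a_0\lambda_0$ and $\alpha_2 > a_1\lambda_1$, i.e. precisely the conditions $a_0 < \alpha_1/\lambda_0$ and $a_1 < \alpha_2/\lambda_1$ required by Lemmas 2 and 3, while $\alpha_1 + \alpha_2 = 1 - \epsilon/3 < 1$ leaves $\beta = \epsilon/3 > 0$. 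With these choices $G_1$ and $G_2$ are strictly convex, the nuclear and $\ell_1$ norms are convex, and $\tfrac{\beta}{2}\|\vec{Y}-\vec{X}\|_F^2$ is convex; since a sum of convex functions that contains at least one strictly convex summand is strictly convex, $F$ is strictly convex. I expect the only delicate point to be the degenerate cases $\lambda_0 = 0$ or $\lambda_1 = 0$, where the corresponding ratio condition is read as vacuous and the associated penalty term simply drops out; the substantive idea is the quadratic splitting, and once it is in place the verification of the existence of admissible $\alpha_1,\alpha_2$ is the essential (and routine) step.
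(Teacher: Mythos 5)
Your proof is correct and follows essentially the same route as the paper: decompose $\phi(x;a) = |x| + s(x;a)$ via Lemma 1, split the quadratic data-fidelity term between the two concave penalty sums so as to invoke Lemmas 2 and 3, and absorb the leftover nuclear and $\ell_1$ norms as convex terms. Your explicit choice $\alpha_1 = a_0\lambda_0 + \epsilon/3$, $\alpha_2 = a_1\lambda_1 + \epsilon/3$ makes the existence of an admissible split (which the paper leaves implicit when it ``combines'' inequalities \eqref{eq::inequality 1} and \eqref{eq::inequality 2} into \eqref{eq::final inequality}) fully rigorous, but this is a refinement of the same argument rather than a different one.
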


\begin{proof}
	Let $\alpha \in [0,1]$. Consider the function $G\colon \mathbb{R}^{m \times n}\to \mathbb{R}$ defined as
	\begin{align}
	\label{eq::G function}
	G(\vec{X}) :&= \dfrac{\alpha}{2}\|\vec{Y} - \vec{X}\|_F^2 + \lambda_0 \sum_{i=1}^{m}s\bigl(\sigma_i(\vec{X});a_0\bigr) \nonumber \\ 
	&+ \dfrac{1-\alpha}{2}\|\vec{Y} - \vec{X}\|_F^2 +  \lambda_1 \sum_{i=1}^{m}\sum_{j=1}^{n} s(\vec{X}_{ij};a_1),
	\end{align} where $s\colon\mathbb{R}\to\mathbb{R}$ is defined in Lemma 1. Using the functions $G_1$ and $G_2$, as defined in \eqref{eq::ELMA cost convex} and \eqref{eq::second cost function with l1 norm} respectively, the function $G$ can be written as
	\begin{align}
	G(\vec{X}) = G_1(\vec{X}) + G_2(\vec{X}), 
	\end{align} where $\alpha_1 = \alpha$, and $\alpha_2 = 1-\alpha$. Due to Lemma \ref{Lemma 2} and Lemma \ref{Lemma 3}, the function $G$ is strictly convex (being a sum of two strictly convex functions) if $a_0$ and $a_1$ satisfy the following inequalities,
	\begin{align}
	\label{eq::inequality 1}
	&0 \leqslant a_0 < \dfrac{\alpha}{\lambda_0}, \\
	\label{eq::inequality 2}
	&0 \leqslant a_1 < \dfrac{1-\alpha}{\lambda_1}.
	\intertext{Combining \eqref{eq::inequality 1} and \eqref{eq::inequality 2}, we obtain,}
	\label{eq::final inequality}
	&0 \leqslant a_0\lambda_0 + a_1\lambda_1 < 1.
	\end{align} As a result, the function $G$ is strictly convex if $a_0$ and $a_1$ satisfy the inequality in \eqref{eq::final inequality}. Recall that $\phi(x;a) = s(x;a) + |x|$ from \eqref{eq::s function}, due to which the function $F$ in \eqref{eq::cost function} can be written as
	\begin{align}
	F(\vec{X}) = &\dfrac{1}{2}\|\vec{Y - X}\|_F^2 + \lambda_0\sum_{i=1}^{m} \Bigl[ s(\sigma_i(\vec{X});a_0) + |\sigma_i(\vec{X})| \Bigr]\nonumber \\
	& \qquad + \lambda_1\sum_{i=1}^{m}\sum_{j=1}^{n} \Bigl[ s(\vec{X}_{i,j};a_1) + |\vec{X}_{i,j}| \Bigr] \\
	= &\dfrac{1}{2}\|\vec{Y - X}\|_F^2 + \lambda_0 \sum_{i=1}^{m} s(\sigma_i(\vec{X});a_0) + \|\vec{X}\|_* \nonumber \\
	& \qquad + \lambda_1\sum_{i=1}^{m}\sum_{j=1}^{n} s(\vec{X}_{i,j};a_1) + \|\vec{X}\|_1 \\
	= & G(\vec{X}) + \|\vec{X}\|_* + \|\vec{X}\|_1.
	\end{align} Thus, the function $F$ is strictly convex (being a sum of a strictly convex function and convex functions) if $a_0$ and $a_1$ satisfy the inequality \eqref{eq::final inequality}.
\end{proof}

\begin{figure}
	\centering
	\includegraphics[]{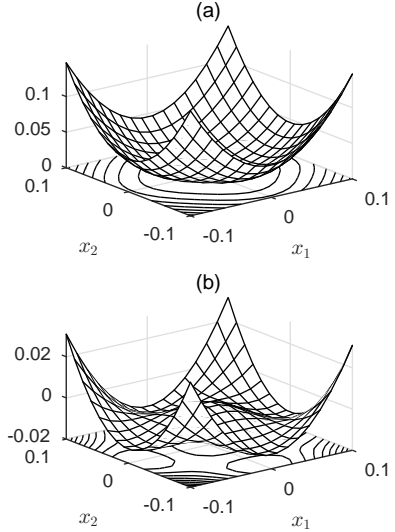}
	\caption{Illustration of the convexity condition provided by Theorem 1. (a) The function $G$ is convex when $a_0$ and $a_1$ satisfy the inequality \eqref{eq::convexity condition}. (b) The function $G$ is non-convex otherwise (multiple local minima can be seen in the contour plot).}
	\label{fig::Convexity condition}
\end{figure}

The convexity condition provided by Theorem 1 is illustrated in Fig.~\ref{fig::Convexity condition}. The matrix $\vec{X}$ is constructed by tiling 10 copies of the matrix $\vec{Z} \in \mathbb{R}^{2 \times 2}$,
\begin{align}
Z = \left[ \begin{array}{cc}
x_1 & x_2 \\
x_2 & x_1
\end{array}\right], \quad x_i \in \mathbb{R},
\end{align} and randomly setting 70\% of its entries zero. Thus, the matrix $\vec{X}$ is of rank 2. We use $\lambda_0 = \lambda_1 = 1$ and set $\vec{Y} = 0$. We set the value of $a_0 = 0.8$, and $a_1 = 0.19$, as per \eqref{eq::convexity condition}, to ensure that the function $G$ in \eqref{eq::G function} is strictly convex. As seen in Fig.~\ref{fig::Convexity condition}(a), the function $G$ is strictly convex. However, on increasing the value of $a_1$ to $1$, the function $G$ is non-convex, as seen in Fig.~\ref{fig::Convexity condition}(b). 

\begin{figure}[t!]
	\centering
	\includegraphics[]{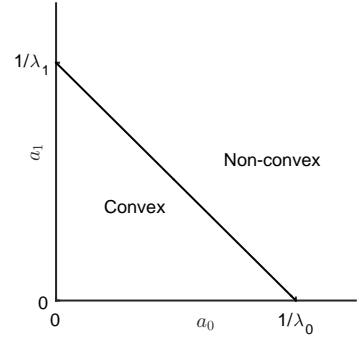}
	\caption{The function $F$ in \eqref{eq::cost function} is strictly convex for all values of $a_0$ and $a_1$ inside the triangular region.}
	\label{fig::convexity region}
\end{figure}

The inequality \eqref{eq::convexity condition} given by Theorem 1, constitutes a convexity triangle for the function $F$ in \eqref{eq::cost function}. For all values of $a_0$ and $a_1$ inside the triangular region of Fig.~\ref{fig::convexity region} the function $F$ is strictly convex. However, the function $F$ in \eqref{eq::cost function} is non-convex for values of $a_0$ and $a_1$ outside the triangular region.

\section{Algorithm}
\label{section::Algorithm}
We use the alternating direction method of multipliers (ADMM) \cite{Boyd2010} in conjunction with variable splitting to derive an algorithm for the solution to the ISLR problem \eqref{eq::cost function}. The convergence of ADMM to the global minimum is guaranteed when the objective function is a sum of two convex functions \cite{Eckstein1992}. The convergence of ADMM for a non-convex optimization problem to a stationary point is guaranteed under certain mild assumptions \cite{Hong2015}. 

The following theorem derives an algorithm for solving the ISLR problem and guarantees it convergence. In particular, the theorem provides a condition on the value of the scalar augmented Lagrangian parameter $\mu$, to ensure that the sub-problems of ADMM are strictly convex.

\begin{algorithm}[tb]
\caption{Solution to the proposed ISLR problem. The objective function $F$ is given in \eqref{eq::cost function}.}
\label{table::Algorithm}
\begin{algorithmic}[1]
	\State {\bf Input:} $\vec{Y}$, $\lambda_i$, $a_i$, $\mu$, $\epsilon$ 
	\State {\bf Initialize:}  $\vec{Z} = 0$, $\vec{D} = 0$ 
	\Repeat
	\State $\vec{X} \gets \mbox{prox}_{\phi} \left(\dfrac{1}{1+\mu} \bigl(\vec{Y} + \mu(\vec{Z} + \vec{D})\bigr) ;\dfrac{\lambda_1}{1 + \mu},a_1 \right)$ 
	\State $[\vec{U}, \vec{\Sigma}, \vec{V}] \gets \mbox{SVD} \left(\vec{X} - \vec{D} \right)$ 
	\State $\vec{Z} \gets \vec{U} \cdot \mbox{prox}_{\phi} (\vec{\Sigma};\lambda_0/\mu, a_0) \cdot \vec{V}^T $
	\State $\vec{D} \gets \vec{D} - (\vec{X} - \vec{Z})$
	\Until $\|F(\vec{X}^{k}) - F(\vec{X}^{k-1})\|_2 < \epsilon \|F(\vec{X}^k)\|_2$
\end{algorithmic}
\end{algorithm}

\begin{theorem}
	\label{theorem 2}
	Let $\phi\colon\mathbb{R} \to \mathbb{R}$ be a non-convex penalty function satisfying Assumption 1. Let $a_0$ and $a_1$ satisfy
	\begin{align}
	0 \leqslant a_0\lambda_0 + a_1\lambda_1 < 1,
	\end{align}for $\lambda_0, \lambda_1 \geq 0$. Further, let $\mu$ be the scalar augmented Lagrangian parameter. If $\mu > 1$, then the iterative algorithm in Table~\ref{table::Algorithm} converges to the global minimum of the function $F$ in \eqref{eq::cost function}.	
\end{theorem}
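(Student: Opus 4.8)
The plan is to cast \eqref{eq::cost function} in standard ADMM form by a single variable splitting, derive the resulting block updates so that they coincide with Table~\ref{table::Algorithm}, verify that each sub-problem is strictly convex when $\mu > 1$, and then close the argument with a convergence result together with the global strict convexity of $F$ supplied by Theorem~\ref{theorem::Convexity Condition}.

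First I would introduce the auxiliary variable $\vec{Z}$ with constraint $\vec{X}=\vec{Z}$, rewriting \eqref{eq::cost function} as the minimization of $f(\vec{X})+g(\vec{Z})$ subject to $\vec{X}=\vec{Z}$, where $f(\vec{X})=\tfrac12\|\vec{Y}-\vec{X}\|_F^2+\lambda_1\sum_{i,j}\phi(\vec{X}_{ij};a_1)$ and $g(\vec{Z})=\lambda_0\sum_i\phi(\sigma_i(\vec{Z});a_0)$. Forming the scaled augmented Lagrangian $L_\mu(\vec{X},\vec{Z},\vec{D})=f(\vec{X})+g(\vec{Z})+\tfrac{\mu}{2}\|\vec{X}-\vec{Z}-\vec{D}\|_F^2$ and writing out the alternating minimizations produces the three algorithm steps. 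In the $\vec{X}$-step the two quadratic terms combine into a single Frobenius penalty with coefficient $1+\mu$ centered at $\tfrac{1}{1+\mu}(\vec{Y}+\mu(\vec{Z}+\vec{D}))$; since the remaining penalty is separable across entries, the minimizer is obtained entrywise by $\mathrm{prox}_\phi$ with threshold $\tfrac{\lambda_1}{1+\mu}$, reproducing line~4. In the $\vec{Z}$-step the objective is an orthogonally invariant spectral function plus a Frobenius proximal term, so by von Neumann's trace inequality the minimizer shares the singular vectors of $\vec{X}-\vec{D}$ and is obtained by applying $\mathrm{prox}_\phi$ to its singular values, reproducing lines~5--6; line~7 is the dual ascent.

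Next I would show that $\mu>1$ makes both sub-problems strictly convex. For the $\vec{Z}$-step, substituting $\phi=s+|\cdot|$ from \eqref{eq::s function} splits the objective into $\tfrac{\mu}{2}\|\vec{Z}-(\vec{X}-\vec{D})\|_F^2+\lambda_0\sum_i s(\sigma_i(\vec{Z});a_0)$ plus the convex nuclear norm $\lambda_0\|\vec{Z}\|_*$; the former is exactly $G_1$ of Lemma~\ref{Lemma 2} with $\alpha_1=\mu$, hence strictly convex provided $a_0\lambda_0<\mu$, which holds since $a_0\lambda_0<1<\mu$. For the $\vec{X}$-step the analogous substitution yields $G_2$ of Lemma~\ref{Lemma 3} with $\alpha_2=1+\mu$ plus the convex $\ell_1$ norm, so strict convexity requires $a_1\lambda_1<1+\mu$, which again holds because $a_1\lambda_1<1<1+\mu$. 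Thus both updates are single-valued and well defined.

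The main obstacle is the final convergence argument, because the block function $g$ is itself non-convex, so the classical convex ADMM guarantee \cite{Eckstein1992} does not apply to the splitting as written. I would resolve this in either of two complementary ways. One route redistributes the data-fidelity term across the two blocks in the proportions $\alpha,\,1-\alpha$ used in the proof of Theorem~\ref{theorem::Convexity Condition}, which for a feasible $\alpha$ (guaranteed by \eqref{eq::convexity condition}) renders both component functions convex and permits invoking \cite{Eckstein1992}. The other route appeals to the non-convex ADMM theory \cite{Hong2015}, whose hypotheses are met once the sub-problems are strictly convex, as just established for $\mu>1$, yielding convergence to a stationary point of $F$. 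In either case the conclusion is sealed by Theorem~\ref{theorem::Convexity Condition}: since $F$ is strictly convex under \eqref{eq::convexity condition}, its stationary point is unique and is the global minimum, so the limit of the iterates is the global minimizer.
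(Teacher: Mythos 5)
Your derivation matches the paper's: the same single splitting $\vec{X}=\vec{Z}$, the same three ADMM sub-problems, the same completion of the square in the $\vec{X}$-step giving the threshold $\lambda_1/(1+\mu)$, and the same appeal to the generalized nuclear norm result of \cite{Parekh2015sub} for the $\vec{Z}$-step with the condition $a_0\lambda_0 < \mu$, which $\mu>1$ guarantees under \eqref{eq::convexity condition}. Where you differ is the final convergence step, and there you are actually more candid than the paper: the paper simply asserts that the resulting iteration converges to the global minimum after establishing that both sub-problems are strictly convex, while you explicitly flag that the block function $g(\vec{Z})=\lambda_0\sum_i\phi(\sigma_i(\vec{Z});a_0)$ is non-convex, so the classical two-block convex ADMM guarantee of \cite{Eckstein1992} does not apply to the splitting as written. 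That observation is correct and identifies the real weak point of the argument.

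Of your two proposed repairs, be careful with the first. Redistributing the data-fidelity term in proportions $\alpha$ and $1-\alpha$ (as in the proof of Theorem~\ref{theorem::Convexity Condition}) does make both block functions convex and does make \cite{Eckstein1992} applicable, but it changes the augmented Lagrangian and hence the ADMM iterations themselves: the $\vec{Z}$-update would then contain a term $\tfrac{\alpha}{2}\|\vec{Y}-\vec{Z}\|_F^2$ and would no longer reduce to the singular-value proximal step in lines 5--6 of Algorithm~\ref{table::Algorithm}. So that route proves convergence of a \emph{different} algorithm, not the one in the statement. Your second route, invoking the non-convex ADMM analysis of \cite{Hong2015} and then upgrading ``stationary point'' to ``global minimum'' via the strict convexity of $F$ from Theorem~\ref{theorem::Convexity Condition}, is the one consistent with the algorithm as listed and is essentially what the paper implicitly relies on (it cites both references in the paragraph preceding the theorem); a fully rigorous treatment would still require checking the hypotheses of \cite{Hong2015} for this particular splitting, a verification that neither you nor the paper carries out. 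One minor additional point: your claim that the $\vec{X}$-step is $G_2$ of Lemma~\ref{Lemma 3} with $\alpha_2 = 1+\mu$ is fine, but the operative condition the paper uses is simply $a_1 < 1/\lambda_1 < (1+\mu)/\lambda_1$, which also guarantees via \eqref{theta} that the proximity operator is single-valued at the rescaled threshold.
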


\begin{proof}
	Without loss of generality, we set $m = n$. We re-write the ISLR objective function \eqref{eq::cost function} using variable splitting \cite{Afonso2010} as
		\begin{align}
		\arg\min_{\vec{X}} &\Biggl\lbrace \dfrac{1}{2} \|\vec{Y}-\vec{X}\|_F^2 + \lambda_0\sum_{i=1}^{m}\phi(\sigma_i(\vec{\vec{Z}});a_0) \nonumber \\
		&\qquad\quad + \lambda_1\sum_{i=1}^{m}\sum_{j=1}^{n} \phi(\vec{X}_{i,j};a_1) \Biggr\rbrace, \nonumber \\
		\label{eq::Variable splitting formulation}
		\mbox{s.t.} \quad &\vec{X} = \vec{Z}.
		\end{align} The minimization of the ISLR objective function in \eqref{eq::Variable splitting formulation} is separable in $\vec{X}$ and $\vec{U}$. Applying ADMM to \eqref{eq::Variable splitting formulation}, yields the following iterative procedure, where $\mu$ is the scalar augmented Lagrangian parameter and $\vec{D} \in \mathbb{R}^{m \times n}$ is update variable.
	\begin{subequations}
		\label{eq::sub-problems}
		\begin{align}
		\label{eq::sub-problem for X}
		\vec{X} &\gets \arg\min_{\vec{X}} \Biggl\lbrace \dfrac{1}{2}\|\vec{Y} - \vec{X}\|_F^2 + \dfrac{\mu}{2}\|\vec{X}-(\vec{Z} + \vec{D})\|_F^2 \nonumber \\
		&\qquad \qquad \qquad \qquad + \lambda_1 \sum_{i=1}^{m}\sum_{j=1}^{n} \phi(\vec{X}_{i,j}; a_1) \Biggr\rbrace \\
		\label{eq::sub-problem for U}
		\vec{Z} &\gets \arg\min_{\vec{Z}} \Biggl\lbrace \dfrac{\mu}{2}\|\vec{X} - \vec{D} - \vec{Z}\|_F^2 + \lambda_0 \sum_{i=1}^{m}\phi\bigl(\sigma_i(\vec{Z});a_0\bigr) \Biggr\rbrace \\
		\label{eq::updated multiplier}
		\vec{D} &\gets \vec{D} - (\vec{X} - \vec{Z})
		\end{align}
	\end{subequations} 
	Combining the quadratic terms and ignoring the constant terms, the sub-problem \eqref{eq::sub-problem for X} can be written as
	\begin{align}
	\label{eq::Simplified subproblem for X}
	\vec{X} &\gets \arg\min_{\vec{X}} \Biggl\lbrace \dfrac{1}{2}\biggl\|\dfrac{1}{1+\mu} \left(\vec{Y} + \mu(\vec{Z} + \vec{D})\right) - \vec{X}\biggr\|_F^2 \nonumber \\
	& \qquad \qquad \qquad + \dfrac{\lambda_1}{1+\mu}\sum_{i=1}^{m}\sum_{j=1}^{n}\phi(\vec{X}_{i,j};a_1) \Biggr\rbrace.
	\end{align}
	Since $a_1 < 1/\lambda_1$, as per the assumption, the sub-problem \eqref{eq::Simplified subproblem for X} is strictly convex and its solution can be obtained using the proximal operator associated with $\phi$, i.e., 
	\begin{align}
	\label{eq::solution to X}
	\vec{X} &\gets \mathrm{prox}_{\phi}(\tilde{\vec{Y}}; \lambda_1 / (1+\mu), a_1 ).
	\end{align} 
	The sub-problem \eqref{eq::sub-problem for U} is the generalized nuclear norm minimization problem, whose solution in closed form is provided by Theorem 2 in \cite{Parekh2015sub}. Note that the solution is guaranteed to be the global minimum if $a_0 < \mu/\lambda_0$. Hence, using the inequality \eqref{eq::convexity condition}, the sub-problem \eqref{eq::sub-problem for U} is guaranteed to be strictly convex if $\mu > 1$. As a result, using $(\vec{X} - \vec{D}) = \vec{U} \vec{\Sigma} \vec{V}^T$ as the singular value decomposition (SVD) of the matrix $\vec{X}-\vec{D}$, we get
	\begin{align}
	\label{eq::solution to U}
	\vec{Z} &\gets \vec{U} \cdot \mathrm{prox}_{\phi} (\vec{\Sigma}; \lambda_0/\mu,a_0) \cdot \vec{V}^T.
	\end{align}
	Combining \eqref{eq::solution to X} and \eqref{eq::solution to U}, we obtain the iterative algorithm~\ref{table::Algorithm}, which converges to the global minimum of the ISLR objective function in \eqref{eq::cost function}. 
\end{proof}

\section{Examples}
\label{section::Examples}

We illustrate the proposed ISLR method for estimating simultaneously sparse and low-rank matrices via the following examples. We first describe setting the parameters for the proposed method and then showcase the examples. 

\subsection{Parameter tuning}
\begin{figure}[t!]
	\centering
	\includegraphics[]{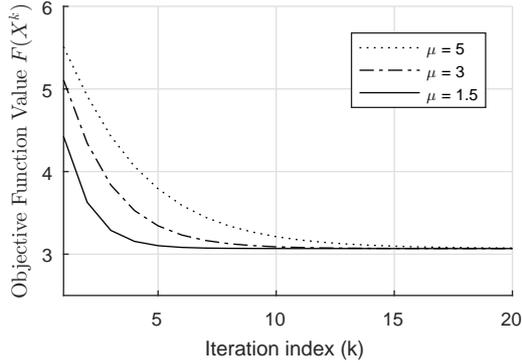}
	\caption{The value of the objective function $F$ in \eqref{eq::cost function} at every iteration of the ISLR algorithm for the speech signal denoising problem.}
	\label{fig::cost function history}
\end{figure} 
The proposed ISLR algorithm~\ref{table::Algorithm} requires the specification of two regularization parameters $\lambda_0$ and $\lambda_1$, two penalty parameters $a_0$ and $a_1$ and the scalar augmented Lagrangian parameter $\mu$. We set the regularization parameters $\lambda_i$ as 
\begin{align}
\lambda_i = \beta_i\sigma, \qquad i = 0,1.,
\end{align} where $\sigma$ is the standard deviation of AWGN (or an estimate) and $\beta_i$ are chosen so as to maximize the signal-to-noise ratio (SNR) for the SLR and the ISLR methods. The values of $a_0$ and $a_1$ are set as
\begin{align}
\label{eq::a_0 and a_1 value}
a_0 &= \dfrac{c}{\lambda_0}, \qquad c \in (0,1) \\
a_1 &= \dfrac{1-a_0\lambda_0}{\lambda_1},
\end{align} respectively. The value of $a_0 \in [0, 1/\lambda_0)$ affects the sparsity of the singular values, and the value of $a_1 \in [0,1/\lambda_1)$ affects the sparsity of the elements of the matrix to be estimated. Thus, if the sparsity of the singular values is favored over the sparsity of the elements of the matrix to be estimated, the point $(a_0,a_1)$ may be set in the lower-right region of the convexity triangle shown in Fig.~\ref{fig::convexity region}. Alternatively, if the sparsity of the elements is preferred over the sparsity of the singular values of the matrix to be estimated, the point $(a_0,a_1)$ may be set in the upper-left region of convexity triangle in Fig.~\ref{fig::convexity region}.

\begin{figure}[t!]
	\centering
	\includegraphics[]{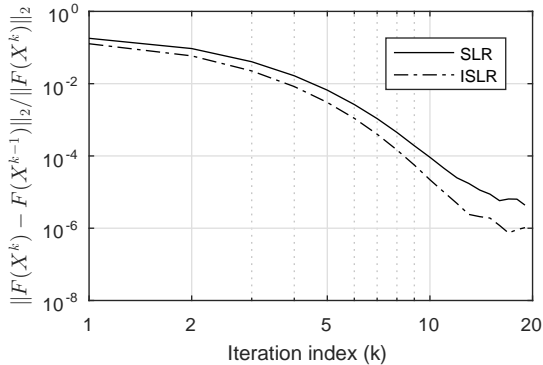}
	\caption{Log-log plot showing the convergence of the ISLR objective function \eqref{eq::cost function} and the SLR objective function \eqref{eq::SLR}.}
	\label{fig::loglog plot}
\end{figure} 

We set the value of $\mu$ as $\mu = 1.5$. As per Theorem 2, the ISLR algorithm listed in Table 1 is guaranteed to converge to the global minimum for $\mu > 1$. However, depending on the value of $\mu$, the convergence may be slow. As such for this example, and the one that follows, we run the ISLR algorithm till a certain tolerance level is reached, i.e., we run the algorithm till
\begin{align}
\label{eq::Tolerance level}
\|F(\vec{X}^{k}) - F(\vec{X}^{k-1})\|_2 < \epsilon \|F(\vec{X}^k)\|_2,
\end{align} where $\epsilon$ is a user-defined tolerance level, usually set to $\epsilon = 10^{-5}$. The value of the objective function \eqref{eq::cost function} for 20 iterations of the ISLR algorithm, with different values of $\mu$, is shown in Fig.~\ref{fig::cost function history}. Fig.~\ref{fig::loglog plot} shows the log-log plot of the stopping criteria for the ISLR objective function and the SLR objective function \eqref{eq::SLR}.  Note that for the examples that follow, we use the arctangent penalty \cite{Selesnick_2014_MSC} as the nonconvex penalty for the proposed ISLR method. 

\subsection{Synthetic data}

We generate a synthetic matrix $\vec{M} \in \mathbb{R}^{m \times n}$ of rank $k$ using two random matrices $\vec{A} \in \mathbb{R}^{m \times k}$ and $\vec{B}\in \mathbb{R}^{k \times n}$ such that
\begin{align}
\label{eq::M matrix}
\vec{M} := \vec{A}\cdot \vec{B}, 
\end{align} where the entries of $\vec{A}$ and $\vec{B}$ are chosen from an i.i.d standard normal distribution. To measure the performance of the proposed ISLR method and the SLR method, we use the normalized root square error (RSE) defined as
\begin{align}
\label{eq::RSE}
\mathrm{RSE} := \dfrac{\|\vec{X}_{\mathrm{est}} - \vec{X}_{\mathrm{org}}\|_F}{\|\vec{X}_{\mathrm{org}}\|_F}, 
\end{align} where $\vec{X}_{\mathrm{est}}$ represents the estimated matrix and $\vec{X}_{\mathrm{org}}$ represents the desired clean matrix. 

\begin{figure}
	\centering
	\includegraphics[]{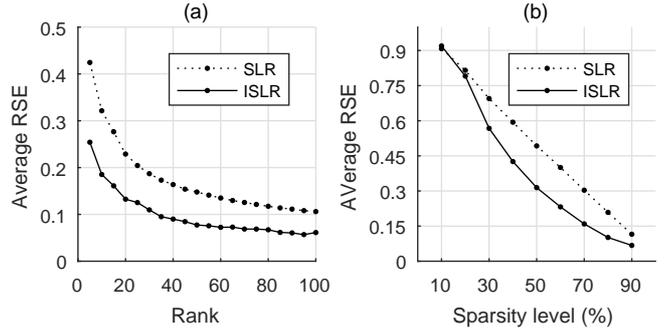}
	\caption{(a) Average RSE as a function of the rank of the input matrix. (b) Average RSE as a function of the level of sparsity of the input matrix. }
	\label{Fig::Average RSE as a function}
\end{figure}

We consider two types of simulations: RSE as a function of the rank ($k$) of the synthetically generated matrix and RSE as a function of the sparsity level of the input matrix. For the first simulation, we fix the sparsity level at 60\% (i.e., approximately 40\% of the entries of the clean input matrix $\vec{M}$ are zero) while varying the rank $k$ of the input matrix. We add white Gaussian noise ($\sigma = 0.2$) to $\vec{M}$ to generate a noisy input matrix $\vec{Y}$. We generate 15 matrices for each value of the rank $k$ where $1 \leqslant k \leqslant 100$ in increments of $5$ and denoise them using the proposed ISLR method \eqref{eq::cost function} and the SLR method \eqref{eq::SLR}. For the second simulation, wherein we consider the RSE as a function of the level of sparsity of the input matrix, we synthetically generate 15 matrices $\vec{M}$ of fixed rank $k = 10$ but with varying levels of sparsity (from 10\% to 90\%). Again, we add white Gaussian noise ($\sigma = 0.2$) to generate the noisy matrix $\vec{Y}$.

Figure~\ref{Fig::Average RSE as a function}(a) shows the average RSE values as a function of the rank $k$ of the input matrix. Figure~\ref{Fig::Average RSE as a function}(b) shows the average RSE values as a function of the level of sparsity of the input matrix. The proposed ISLR method consistently obtains lower RSE values than the SLR method. As expected, for both the methods, the proposed ISLR method and the SLR method, RSE values are lower for matrices that are relatively more sparse. On the other hand, as seen in Fig.~\ref{Fig::Average RSE as a function}(a), we observe that for matrices that are not necessarily low-rank but have decaying singular values, lower values of RSE are obtained for both the methods. Note that for both the  simulations, we do a grid search over a range of values for $\beta_0$ and $\beta_1$ to obtain the values of $\lambda_0$ and $\lambda_1$ respectively, which yield the lowest RSE values (recall that $\lambda_i = \beta_i\sigma$, for $i = 0,1$). Furthermore, in both the simulations, we fix the value of $c$ at $c = 0.5$ for setting the values of $a_0$ and $a_1$. 
\subsection{Speech signal denoising}

\begin{figure}[t!]
	\centering
	\includegraphics[]{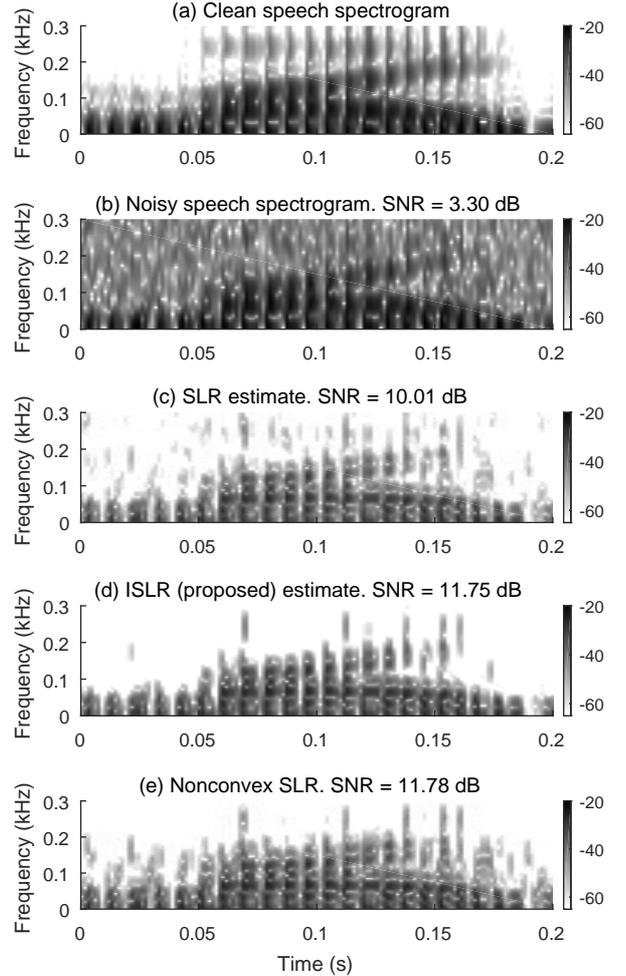}
	\caption{Illustration of proposed method for denoising an audio signal. The colorbar values are in dB.}
	\label{fig::Speech signal denoising}
\end{figure}

We consider the problem of denoising a speech signal with AWGN. We apply the sparse low-rank matrix estimation methods to the spectrogram of the noisy speech signal, and invert the estimated spectrogram to the time domain to obtain the denoised speech signal. Specifically, if $\vec{y}$ is the input speech signal, then the denoised estimate $\vec{x}^*$ is obtained using ISLR as
\begin{align}
\label{eq::Speech estimate}
\vec{x}^* = \vec{S}^{\dagger} \Bigl\lbrace \mbox{ISLR}\bigl(\vec{S(y)};\lambda_i,a_i\bigr) \Bigr\rbrace, \quad i = 0,1.,
\end{align} where $\vec{S}$ and $\vec{S}^{\dagger}$ represent the short-time Fourier transform (STFT) and its inverse, respectively. For this example, we set $\vec{S}$ to be an over-complete STFT, implemented with perfect reconstruction, i.e., $\vec{S}^\dagger\vec{S} = \vec{I}$. The STFT is implemented with 50\% overlap between the windows, for a window size of 64 samples. The DFT length is set to 512 samples. We add AWGN $(\sigma = 0.03)$ to realize the noisy speech signal. We compare the proposed ISLR method with the SLR method \eqref{eq::SLR} and the nonconvex sparse low-rank matrix estimation method \cite{Giampouras2016} which uses the weighted nuclear norm \cite{Gu2014} and the weighted $\ell_1$ norm \cite{Candex_2008_JFAP}.

The spectrogram of the clean speech signal is shown in Fig.~\ref{fig::Speech signal denoising}(a). It can be seen that the spectrogram consists of repeated ridges.  The spectrogram of the noisy signal is shown in Fig.~\ref{fig::Speech signal denoising}(b). Figure~\ref{fig::Speech signal denoising}(c) shows the denoised spectrogram obtained using the SLR method \eqref{eq::SLR} and Fig.~\ref{fig::Speech signal denoising}(d) shows the denoised spectrogram using the ISLR method \eqref{eq::cost function}. Figure.~\ref{fig::Speech signal denoising}(e) shows the spectrogram estimated using the nonconvex method with weighted $\ell_1$ norm and the weighted nuclear norm (see Algorithm 1 in \cite{Giampouras2016}). The ISLR estimated spectrogram has a higher SNR than the SLR estimated spectrogram and contains fewer artifacts. The nonconvex method obtains a slightly higher SNR than the proposed ISLR method and also contains relatively fewer artifacts than the SLR method. The proposed ISLR method obtains SNR values comparable to the state-of-the-art nonconvex method, while being able to guarantee convergence to the unique global minimum. 

\begin{figure}[t!]
	\centering
	\includegraphics[]{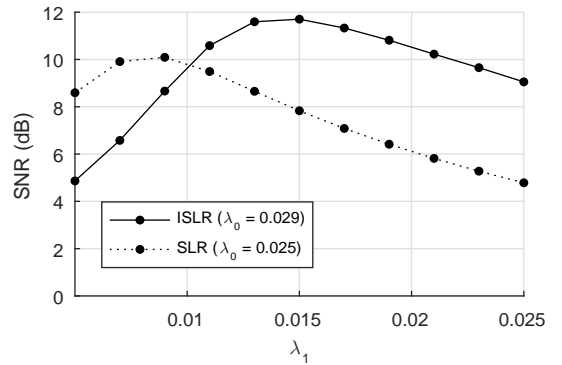}
	\caption{SNR as a function of $\lambda_0$ with fixed $\lambda_1$ for the ISLR and the SLR methods.}
	\label{fig::SNR vs lam}
\end{figure}

Figure~\ref{fig::SNR vs lam} shows the SNR as a function of the regularization parameter $\lambda_1$, when $\lambda_0$ is fixed, for the SLR and the ISLR methods. Note that the standard deviation $\sigma$ of the noise level is also fixed at $(\sigma = 0.03)$. The improvement in the SNR value, when using the ISLR method, is the same when the value of $\lambda_0$ is also varied, in addition to the value of $\lambda_1$. The SNR values are obtained by averaging over 15 realizations for each $\lambda_0,\lambda_1$ pair. 

\subsection{Protein Interactions}
Protein interactions in the `Escherichia coli' bacteria, scored by strength in $[0,2]$, were studied in \cite{Hu2009}. The data can be represented as a weighted graph, which is sparse and low-rank by nature \cite{Richard2012}. The rationale behind the low-rank property of the weighted graph, is that the interactions between two sets of proteins are governed by a small set of factors \cite{Richard2012,Bock2001}. 

\begin{figure}[t!]
	\centering
	\includegraphics[]{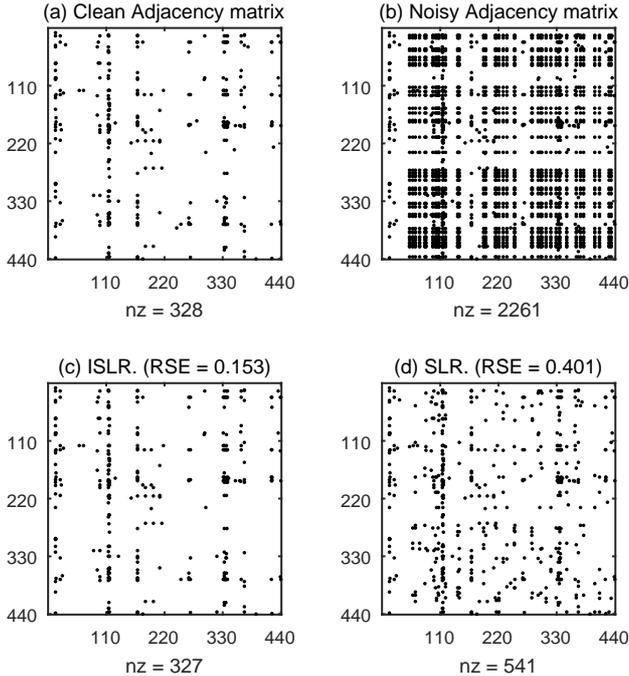}
	\caption{Illustration of denoising the weighted adjacency matrix representing protein interactions in the `Escherichia coli' bacteria. Note that `nz' represents the number of non-zero elements in the matrix.}
	\label{fig::protein example}
\end{figure}

Figure~\ref{fig::protein example}(a) shows the protein interaction data as a weighted adjacency matrix. The adjacency matrix is obtained after retaining 440 proteins of the entire set of 4394 proteins. We corrupt 10\% of the entries of the clean adjacency matrix, selected uniformly at random, with uniform noise in the interval $[0,\sigma]$. The noisy adjacency matrix is shown in Fig.~\ref{fig::protein example}(b), with $\sigma = 0.3$. We set the parameters $\lambda_i$ and $a_i, i = 1,2.$ as in the previous example.  Figure~\ref{fig::protein example}(c) and Fig.~\ref{fig::protein example}(d) show the denoised adjacency matrices obtained using the ISLR and the SLR methods, respectively. As in the case of previous example, the ISLR method offers a better RSE, and tends to correctly estimate the sparsity-pattern of the true matrix. 

\begin{figure}[t!]
	\centering
	\includegraphics[]{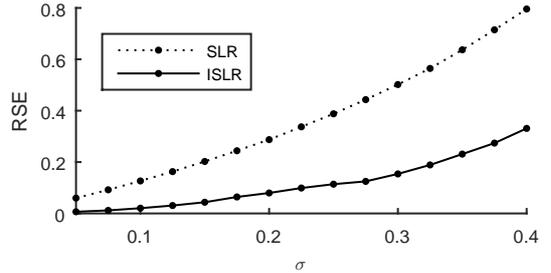}
	\caption{Average RSE as a function of $\sigma$.}
	\label{fig::RSE as a func of sigma}
\end{figure}

In order to assess the relative performance of the proposed ISLR method \eqref{eq::cost function} in comparison to the SLR method \eqref{eq::SLR}, we realize 15 noisy adjacency matrices and denoise them. For each value of $\sigma$, we choose the parameters $\lambda_i$, for both the methods, that yields the lowest RSE. Shown in Fig.~\ref{fig::RSE as a func of sigma} are the average RSE values as a function of $\sigma$. It can be seen that the ISLR method consistently offers a lower RSE.

\section{Conclusion}

We consider the problem of estimating a sparse low-rank matrix from its noisy observation. We generalize the convex formulation proposed for estimation of sparse low-rank matrix estimation \cite{Richard2012}, by utilizing non-convex sparsity-inducing regularizers. The non-convex penalty functions proposed are known to estimate the non-zero signal values more accurately. We show how to set the parameters of the non-convex penalty functions, so as to preserve the convexity of the overall problem (sum of data-fidelity and the rssegularization terms). The critical value of the non-convex penalty parameters define a convexity triangle; for all values of the nonconvex penalty parameters within this triangular region, the cost function is guaranteed to be strictly convex. 

We derive an efficient algorithm using ADMM with a single variable-splitting which solves the proposed convex objective function consisting of non-convex regularizers. We guarantee the convergence of ADMM to the global minimum of the objective function, provided the scalar augmented Lagrangian parameter $\mu$ is chosen such that $\mu > 1$. We illustrate several examples to demonstrate the effectiveness of the proposed formulation for estimation of sparse low-rank matrices.

The proposed method utilizes separable penalty functions (nonconvex) to induce sparsity stronger than separable convex penalty functions. A possible future direction involves the use of non-separable penalty functions, possibly nonconvex, that are designed so as to ensure the strict convexity of the objective function \cite{Selesnick2017}.

\section{Acknowledgements}

The authors thank the anonymous reviewers for their detailed suggestions and corrections. This work was supported by the ONR under grant N00014-15-1-2314 and the NSF under grant CCF-1525398.

\section{References}

\end{document}